\definecolor{myred}{rgb}{0.75,0,0}
\definecolor{mygreen}{rgb}{0,0.5,0}
\definecolor{myblue}{rgb}{0,0,0.65}
\theoremstyle{plain}
\newtheorem{theorem}[subsection]{Theorem}
\newtheorem{proposition}[subsection]{Proposition}
\newtheorem{lemma}[subsection]{Lemma}
\newtheorem{corollary}[subsection]{Corollary}
\theoremstyle{definition}
\newtheorem{remark}[subsection]{Remark}
\newtheorem{example}[subsection]{Example}
\theoremstyle{remark}
\newtheorem{notation}[subsection]{Notation}
\numberwithin{equation}{section}
\newcommand\nc{\newcommand}
\nc\on{\operatorname}
\nc\renc{\renewcommand}
\newcommand\bp{{\mathbb P}}
\newcommand\bz{{\mathbb Z}}
\newcommand\ba{{\mathbb A}}
\newcommand \ra{\rightarrow}
\newcommand \spec{\text{Spec }}
\newcommand \pt{\pi_1^{\on{tame}}}
\DeclareMathOperator\gal{Gal}
\def\listtodoname{List of Todos}
\def\listoftodos{\@starttoc{tdo}\listtodoname}
\title[Invariance of Fundamental Group]{Invariance of the tame fundamental group under base change between algebraically closed fields}
\author{Aaron Landesman}
\begin{document}

\maketitle

\begin{abstract}
	We show that the tame \'etale fundamental group
	of a connected normal finite type separated scheme
	remains invariant
	upon base change between algebraically closed fields of characteristic
	$p \geq 0$.
\end{abstract}

\section{Statement of theorem}

In a wide range of number theoretic situations, one may want to compare local systems on a
variety over one algebraically closed field to local systems on the base change
of the variety to a larger
algebraically closed field. 
At least when these local systems are tame, the two
notions should be equivalent.
Our main result,
\autoref{theorem:isomorphism-on-algebraically-closed-base-change}, states that this is indeed true. See
\autoref{remark:examples-in-literature} for some sample uses of this result in
number theory.

We now introduce notation to precisely state our main result.
Let $U$ be a connected normal finite type separated scheme over an algebraically closed base field $k$ of characteristic
$p$, allowing the possibility
$p = 0$. Let $\pi_1(U)$ denote the \'etale fundamental group of $U$,
where we leave the base point implicit.
If $\overline{U}$ is a proper normal scheme containing $U$ as a dense open
subscheme, we call $\overline{U}$ a normal compactification of $U$.
If moreover $\overline U$ is projective, we call $\overline U$ a projective
normal compactification of $U$.
Normal compactifications of normal separated finite type schemes always exist, and
projective normal compactifications of normal quasi-projective schemes always
exist, as described in \autoref{remark:tameness-definition}.

We next introduce notation to define the numerically tame fundamental group with respect to the above
normal compactification $U \to \overline{U}$. We denote this by $\pt(U)$, which
implicitly depends on the normal compactification $U \subset \overline{U}$.
See
\cite[Appendix, Example 2]{kerzS:on-different-notions}
for an example demonstrating this dependence on the choice of compactification.
Also see \autoref{remark:numerically-tame-compatibility}.
This numerically tame fundamental group is a quotient of the usual \'etale
fundamental group. Moreover, the prime-to-$p$ \'etale fundamental group, whose
finite quotients correspond to covers of degree relatively prime to $p$, is a quotient of
the tame fundamental group.
Here and elsewhere, when $p = 0$, we consider every integer to be relatively
prime to $p$ so that the prime-to-$p$ \'etale fundamental group is the same as
the usual \'etale fundamental group.

First, we introduce the notion of tameness. In order to define tameness, we
first recall the definition of the inertia group.
Let $E \to U$ be a finite \'etale Galois $G$-cover.
By convention, we assume Galois covers are connected.
Let $s \in \overline{U}$ be a point, let
$\overline{E}$ denote the normalization of $\overline{U}$ in
the function field of $E$.
Let $t \in \overline{E}$ map to $s$ and define the {\em decomposition
group} of $\overline E \to \overline U$ at $t$ to be $D_{t,\overline E/\overline U}:= \{g \in G : gt = t\}$.
Then, the {\em inertia group} of $\overline E \to \overline U$ at $t$ is
$I_{t,\overline E/\overline U} := \ker\left( D_{t,\overline E/\overline U} \to
\on{Aut}_s(t) \right).$
Changing our choice of $t$ results in a conjugate inertia group, and so we use
$I_{s,\overline E/\overline U}$ 
to denote the {\em inertia group} of $\overline E \to \overline U$ at $s$ 
which is the conjugacy class of the subgroup $I_{t,\overline E/\overline U}$ for any $t$ over $s$.
Note that in the case that the residue fields of $s$ and $t$ agree, the
inertia group agrees with the decomposition group. In particular, this
automatically holds when the residue field of $s$ is algebraically closed.

We next define tameness.
We say $E \to U$ is {\em tame along $s$} if the inertia group of $\overline E
\to \overline U$ at
$s$ has order prime to $p$. 
In the case $E \to U$ is not Galois, we say $E \to U$ is {\em tame along $s$} if
the Galois closure of $E \to U$ is tame along $s$.
We say $E \to U$ is {\em tame} if it is tame at every point $s \in
\overline{U} - U$.

Finally, we come to the definition of the numerically tame fundamental group.
Let $\overline{b} \in U$ denote a geometric point, which we use as a basepoint.
For $E \to U$ a finite \'etale Galois cover, let $\on{Hom}_U(\overline{b}, E)$
denote the set of maps $\overline{b} \to E$ whose
composition with $E \to U$ is the given map $\overline{b} \to U$.
Following \cite[\S7, p. 17]{kerzS:on-different-notions}
the {\em numerically tame fundamental group}, $\pt(U, \overline b),$ is by definition 
the automorphism group of the fiber functor which sends
a tame finite \'etale cover $E \to U$ to $\on{Hom}_U(\overline{b}, E)$.
Since every connected finite \'etale cover is dominated by a 
Galois finite \'etale cover, this profinite group is non-canonically in bijection with the
profinite set
$\lim_{\substack {E \to U \\ \text{finite \'etale tame Galois covers}}} \on{Hom}_U(\overline{b}, E),$
and the latter is a torsor under the former, whose trivialization can be
obtained by choosing a compatible system of basepoints in each
$\on{Hom}_U(\overline{b}, E)$.
We remind the reader that $\pt(U,\overline{b})$ implicitly depends
on the choice of normal compactification $U \to \overline{U}$ because the set of
finite \'etale tame Galois covers
implicitly depends on the compactification.
In what follows, we will omit the basepoint $\overline{b}$ from the notation,
and simply write it as $\pt(U)$.
See \cite{schmidt:tame-coverings} and \cite{kerzS:on-different-notions} for more background on the numerically tame fundamental group.
In particular, when $k$ has characteristic $0$, $\pi_1(U) \simeq \pt(U)$.

If $X \ra Y$ and $Z \ra Y$ are morphisms, we denote $X \times_Y Z$ by $X_Z$.
In the case $Z = \spec B$, we also denote $X \times_Y Z$ by $X_B$.

Our main result is the following theorem:
\begin{theorem}
	\label{theorem:isomorphism-on-algebraically-closed-base-change}
	Suppose $k$ is an algebraically closed field of characteristic $p\geq 0$ 
	and $U$ is a connected normal separated finite type scheme over $k$.
Let $L$ be any algebraically closed field containing $k$
and $\overline U$ any normal compactification of $U$.
Then, the natural
map $\pt(U_L) \ra \pt(U)$ is an isomorphism, where tameness for covers of $U_L$
is taken with respect to the normal compactification $U_L \subset (\overline U)_L$.
\end{theorem}
Using the fact that the fundamental group of a scheme is unchanged under
inseparable field extensions, 
\cite[Tag 0BQN]{stacks-project},
we can generalize the above theorem to the case
that $k$ and $L$ are only separably closed.
\begin{corollary}
	\label{corollary:separably-closed}
	Suppose $k$ is a separably closed field of characteristic $p\geq 0$ 
	and $U$ is a connected normal separated finite type scheme over $k$.
Let $L$ be any separably closed field containing $k$
and $\overline U$ any normal compactification of $U$.
Then, the natural
map $\pt(U_L) \ra \pt(U)$ is an isomorphism, where tameness for covers of $U_L$
is taken with respect to the normal compactification $U_L \subset (\overline U)_L$.
\end{corollary}

\begin{remark}
	\label{remark:tame-and-prime-to-p}
	The result
	\autoref{theorem:isomorphism-on-algebraically-closed-base-change} for tame fundamental groups described above
	implies an analogous result for prime-to-$p$ fundamental groups.
	Namely, let $\pi_1'(U)$ denote the prime-to-$p$ fundamental group, which is the
limit of automorphism groups of all Galois finite \'etale covers of
$U$ of degree prime to $p$.
Because prime-to-$p$ covers are all tame, we obtain from
\autoref{theorem:isomorphism-on-algebraically-closed-base-change} that the natural
map $\pi_1'(U_L) \ra \pi_1'(U)$ is an isomorphism.
\end{remark}

\begin{remark}
	\label{remark:attribution}
	\autoref{theorem:isomorphism-on-algebraically-closed-base-change} is surely a folklore theorem.
	Nevertheless, in its complete form, the author was unable to find it in the
	literature.
	The proof written here is primarily a combination of ideas presented to me by 
	Brian Conrad and Jason Starr.
	In particular, Jason Starr has written up a separate proof on mathoverflow at
	\cite{MO:etale-fundamental-group-base-change-between-algebraically-closed-fields}.
	The proof in this note is a re-organization of the ideas presented in that post.
\end{remark}

\begin{remark}
	\label{remark:past-work}
	Many special cases of
	\autoref{theorem:isomorphism-on-algebraically-closed-base-change}
	already exist in the literature.
		The prime-to-$p$ version of
	\autoref{theorem:isomorphism-on-algebraically-closed-base-change} as in
	\autoref{remark:tame-and-prime-to-p}
	was previously verified in
	\cite[Corollary
	A.12]{lieblichO:generators-and-relations-for-the-etale-fundamental-group}
	via a proof heavily involving stacks.
	Separately, this was also shown in \cite[Corollaire
	4.5]{orgogozo:alterations-et-groupe-fondamental}.
	The important special case that $U$ is a curve is also mentioned
	in \cite[Theorem
	6.1]{orgogozo2000theoreme}, though the proof is omitted there.
	In characteristic $0$, a proof is given in
\cite[Expos\'e XIII, Proposition 4.6]{noopsortSGA1Grothendieck1971}
taking $Y = \spec L$ in the statement there.
	However, that proof relies on resolution of singularities.

	In the case $U$ is proper, this was proven in
	\cite[Th\'eor\`eme 3]{langS:sur-les-revetements}, 
	\cite[Proposition 5.6.7]{szamuely:galois-groups-and-fundamental-groups},
	\cite[Expos\'e X, Corollaire 1.8]{sga1},
	and also
	\cite[Tag 0A49]{stacks-project}.
\end{remark}
\begin{remark}
	\label{remark:examples-in-literature}
	\autoref{theorem:isomorphism-on-algebraically-closed-base-change} is
	frequently used in the literature.
	We provide a few such instances we have come across, but expect that
	many more examples exist.

	In the case $U$ is quasi-projective and $k$ has positive characteristic,
	the prime-to-$p$ version as in \autoref{remark:tame-and-prime-to-p} 
	is used in \cite[(4.2.1)]{litt2021arithmetic}
	regarding arithmetic representations of fundamental groups.

	In the case $k$ has characteristic $0$, this result is useful in transferring properties of the fundamental group of a variety over $\mathbb Q$ to the corresponding
	base change to $\mathbb C$.
	For example, this was used in the proof of \cite[Lemma 5.2]{zywina2010hilbert} 
	in order to understand images of
	Galois representations of abelian varieties.
	Another sample use is \cite[p. 701, paragraph 3, proof of Proposition
	4.9]{landesman:geometric-average-selmer}, where the result was used by
	the author to
	estimate average sizes of Selmer groups of elliptic curves over function
	fields.

	As is evident, from the above number theoretic examples, 
	\autoref{theorem:isomorphism-on-algebraically-closed-base-change} crops
	up in a variety of situations relevant to number theorists, and so may
	prove a useful fact in the number theorist's toolkit.
\end{remark}

\begin{example}
	\label{example:failure-of-algebraically-closed-base-change-in-char-p}
	The tameness hypothesis in the characteristic $p > 0$ case is crucial.
	If $k \subset L$ are two algebraically closed fields
	of characteristic $p > 0$, then for $U$ a normal quasi-projective scheme
	over $k$, the map $\pi_1(U_L) \ra \pi_1(U)$ is not in general
	an isomorphism.
	Artin--Schreier covers provide counterexamples in the case
	$U = \ba^1_k$.
	In more detail, if $\pi_1(\ba^1_L) \ra \pi_1(\ba^1_k)$ were an isomorphism,
	then the map $H^1(\ba^1_k, \bz/(p)) \ra H^1(\ba^1_L, \bz/(p))$
	would also be an isomorphism.
	The Artin--Schreier exact sequence identifies this with the
	map
	$k\left[ x \right]/\left\{ f^p - f : f \in k[x] \right\} \ra L\left[ x \right]/\left\{ f^p - f : f \in L[x] \right\}$,
	and this map is not surjective because $a x^{p-1}$ for $a \in L - k$ does not lie in the image.
\end{example}

\begin{remark}
	\label{remark:tameness-definition}
	Note that the standard definition of the tame fundamental group is more
	restrictive than our definition in terms of numerical tameness, because the usual
	definition as in
	\cite[Expose XIII, 2.1.3]{noopsortSGA1Grothendieck1971} assumes $U$ has a smooth 
	compactification whose boundary is a normal crossings divisor.
	With this notion from \cite{noopsortSGA1Grothendieck1971}, the tame
	fundamental group is independent of the choice of compactification.

	In contrast, the notion of tame fundamental group we use here makes
	sense for any normal finite type separated scheme $U$ over $k$, since we can find a
	normal compactification of $U$ as follows:

	By Nagata compactification, 
\cite[\href{https://stacks.math.columbia.edu/tag/0F41}{Tag 0F41}]{stacks-project}
if $U$ is finite type and separated, there exists a quasi-compact open immersion $U \to \overline{U}$, where
$\overline{U}$ is a proper scheme. One can then replace $\overline{U}$ with its
normalization to obtain a proper normal scheme $\overline{U}$, containing $U$ as
a dense open.

Moreover, in the case $U$ is quasi-projective, we can also assume $\overline{U}$ is
projective by taking any projective scheme $\overline U$ containing $U$
as a dense open and then replacing $\overline U$ by its normalization.
\end{remark}
\begin{remark}
		\label{remark:numerically-tame-compatibility}
		Our notion of the numerically tame fundamental group agrees with the usual notion
	described in \cite[Expose XIII, 2.1.3]{noopsortSGA1Grothendieck1971}
	when the compactification of $U$
	is smooth with normal crossings boundary by
	\cite[Proposition 1.14]{schmidt:tame-coverings}.
	This tame fundamental group is not in general independent of the choice
	of normal compactification, see \cite[Appendix, Example
	2]{kerzS:on-different-notions}.
\end{remark}

\subsection{Acknowledgements}

I would like to thank Brian Conrad, Jason Starr, and Sean Cotner for key ideas in the proof.
I also thank several anonymous referees for numerous incredibly thorough
readings, many extremely
helpful comments, and multitudes of thoughtful suggestions.
Additionally, I thank Sean Cotner for a detailed reading, and thorough comments.
I thank Peter Haine, Daniel Litt, Martin Olsson, 
Tam\'as Szamuely,
and further anonymous referees for helpful comments.
This material is based upon work supported by the National Science Foundation Graduate Research Fellowship under Grant No. DGE-1656518.

\section{Proof of theorem}
\subsection{Idea of proof of \autoref{theorem:isomorphism-on-algebraically-closed-base-change}}
\label{subsubsection:idea-of-proof-of-algebraically-closed-base-change}
The proof of \autoref{theorem:isomorphism-on-algebraically-closed-base-change} is fairly technically involved,
but the idea is not too complicated:
The key is to verify injectivity of $\pt(U_L) \ra \pt(U)$.
As a first step, we reduce from the normal case to the smooth case using that geometrically normal schemes have a dense open smooth subscheme.
Then, using Chow's lemma,
we reduce to the smooth quasi-projective case.
We therefore assume our variety $U$ is smooth and quasi-projective, and prove the theorem by reducing it to the curve case.
For this reduction, we fiber $U$ over a variety of one lower dimension, in which case we can apply the curve case
to the geometric generic fiber of the fibration.

It remains to deal with the case that $U$ is a quasi-projective smooth curve, which
is also the most technically involved part.
In this case, we can write $U$ as $\overline U - D$,
with $\overline U$ smooth and projective and $D$ a divisor.
To check injectivity, we want to check every finite \'etale cover of $U_L$ is the base change of some finite \'etale cover of $U$.
If $E$ is one such cover, we can use spreading out and specialization to obtain an \'etale cover $U' \ra U$
with the same ramification index over each point of $D$ that $E$ has.
Then, we construct the cover $E'$ which is the normalization of $E$ in $E \times_{U_L} U_L'$,
and verify this is the base change of a cover from $k$.
We do so
by applying the projective version of \autoref{theorem:isomorphism-on-algebraically-closed-base-change},
using that $E'$ and $U_L'$ have projective compactifications $\overline E'$ and $\overline U_L'$
with a finite \'etale map $\overline E' \ra \overline U_L'$.

We now indicate how we put together the steps described in the above to prove \autoref{theorem:isomorphism-on-algebraically-closed-base-change}.
In \autoref{subsection:surjectivity} (\autoref{lemma:surjectivity}), we prove $\pt(U_L) \ra \pt(U)$ is surjective.
For injectivity, we first prove the map is injective
in the case $U$ is a smooth, connected, and quasi-projective curve in
\autoref{subsection:curve-case} (\autoref{proposition:curve-case}).
We prove in \autoref{subsection:induction}
(\autoref{proposition:smooth-quasi-projective-case}) that \autoref{theorem:isomorphism-on-algebraically-closed-base-change}
holds for smooth, quasi-projective varieties of all dimensions.
We next verify the case that $U$ is smooth, finite type, and separated in
\autoref{proposition:smooth-finite-type-case}.
Finally, we complete the proof
in the case that $U$ is normal, connected, finite type, and separated in \autoref{subsubsection:normal-case}.

\subsection{Surjectivity}
\label{subsection:surjectivity}
We first show $\pt(U_L) \ra \pt(U)$ is surjective.
\begin{lemma}
	\label{lemma:surjectivity}
	The map $\pi_1(U_L) \to \pi_1(U)$ is surjective. In particular, $\pt(U_L) \ra \pt(U)$ is surjective.
\end{lemma}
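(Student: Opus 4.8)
The plan is to reduce the statement to a connectedness assertion via the standard Galois-theoretic criterion for surjectivity of the map on \'etale fundamental groups. Recall that a morphism $f \colon X' \ra X$ induces a surjection $\pi_1(X') \ra \pi_1(X)$ if and only if, for every connected finite \'etale cover $V \ra X$, the base change $V \times_X X'$ remains connected (see for instance \cite[Expos\'e V]{noopsortSGA1Grothendieck1971}). Applying this with $X = U$ and $X' = U_L$, it suffices to show: for every connected finite \'etale cover $V \ra U$, the scheme $V_L := V \times_{\spec k} \spec L$ is connected.

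So I would fix such a cover $\varphi \colon V \ra U$. Since $\varphi$ is finite \'etale and $U$ is normal, connected, and quasi-projective over $k$, the scheme $V$ is again normal and quasi-projective over $k$ (normality is inherited by a scheme finite \'etale over a normal scheme, and quasi-projectivity is preserved under finite morphisms), and $V$ is connected by hypothesis. Because $k$ is algebraically closed, a connected $k$-scheme is automatically geometrically connected over $k$; hence $V$ is geometrically connected over $k$. Geometric connectedness is preserved under arbitrary field extension, so $V_L = V \times_{\spec k} \spec L$ is connected, which is exactly what the criterion requires. This proves the lemma.

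The only genuinely nontrivial input here is the fact that a scheme that is connected over an algebraically closed field remains connected after base change to any larger field; everything else is formal bookkeeping with the surjectivity criterion. I do not expect a real obstacle in this lemma — it is the easy half of the theorem. Note in particular that neither the prime-to-$p$ hypothesis nor any special property of $L$ beyond its being an extension field of $k$ is used; all the difficulty of \autoref{theorem:isomorphism-on-algebraically-closed-base-change} is concentrated in the injectivity statement proved later.
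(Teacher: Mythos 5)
Your proof is correct and follows essentially the same route as the paper: both reduce surjectivity to the criterion that connected finite \'etale covers of $U$ pull back to connected schemes over $U_L$, and both settle this by the fact that a connected scheme over an algebraically closed field is geometrically connected, so connectedness survives the extension $k \ra L$. Your write-up just spells out a few more of the intermediate details (e.g.\ that $V$ is normal and quasi-projective, which is not actually needed for this step).
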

\begin{proof}
	It suffices to verify
	that the pullback of any connected finite \'etale cover over $U$ along
	$U_L \ra U$ is connected,
	see, for example, \cite[\href{https://stacks.math.columbia.edu/tag/0BN6}{Tag 0BN6}]{stacks-project}.
	Since $L$ and $k$ are both algebraically closed, 
	the result follows from the fact that connectedness is preserved under base change between algebraically closed
	fields \cite[Proposition 4.5.1]{EGAIV.2}.
\end{proof}

\subsection{Proof of injectivity in the curve case}	
\label{subsection:curve-case}
	We next prove injectivity for smooth connected quasi-projective curves $U$.	
	For this, it suffices to show that any tame Galois finite \'etale 
	cover $E$ of $U_L$ is the base change of some tame Galois finite \'etale cover of $U$.
	Note that any such cover of $U$, whose base change is a tame cover $E$ of
	$U_L$, is automatically tame, since
	tameness can be verified after base extension.
	To prove such an $E$ exists, it suffices
	to find a connected finite \'etale cover $F' \ra U$ over $k$ so that
	$F'_L \ra U_L$ factors through $E$. 

	As a first step, we wish to find a cover $U'$ of $U$ with the same
	ramification indices as $E$ over points
	in the normal projective compactification of $U$.

	\begin{notation}
\label{subsubsection:notation-E}
	Let $k \ra L$ be an inclusion of algebraically closed fields,
	let $U$ be a smooth curve over $k$, $\overline U$ its regular
	projective compactification, and $D := \overline U - U$.
	Let $E \rightarrow U_L$ be a tame Galois finite \'etale cover.
	Let $\overline E$ be the normalization of $\overline U_L$ inside $E$.	
\end{notation}

	\begin{lemma}
		\label{lemma:ramification-order-cover}
		With notation as in \autoref{subsubsection:notation-E},
		there exists a finite Galois cover $\overline U' \ra \overline U$, \'etale over $U$,
		with the same ramification indices that $\overline E$ has over
		the corresponding points of $D_L$.
	\end{lemma}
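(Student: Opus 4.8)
The plan is to prove the lemma by spreading the cover $\overline E\to\overline U_L$ out over a finitely generated $k$-subalgebra of $L$ and then specializing to a $k$-point. First I would record what is being spread out. Since $U_L$ is a smooth connected curve over $L$ and $E\to U_L$ is connected and finite \'etale, $E$ is integral and normal, so $\overline E$ is the smooth projective model of $k(E)$ over $L$; thus $\overline E$ is a smooth proper geometrically connected curve over $L$ carrying a finite morphism $\overline E\to\overline U_L$ that is \'etale over $U_L$ and Galois with group $G:=\gal(E/U_L)$. Because $D$ is a reduced finite closed subscheme of a curve over the algebraically closed field $k$, it is a disjoint union of copies of $\spec k$, so $D_L$ is a disjoint union of copies of $\spec L$ whose closed points correspond bijectively to those of $D$. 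Over each such point $d_L$ the Galois cover $\overline E\to\overline U_L$ has a single well-defined ramification index $e_d$ (the inertia subgroups over $d_L$ are conjugate in $G$), and the goal is to produce a connected finite cover $\overline U'\to\overline U$, \'etale over $U$, whose ramification index over each $d\in D$ is exactly $e_d$.

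Next I would spread out. By the standard limit techniques (see EGA~IV, \S8 and \S17) there is a finitely generated $k$-subalgebra $R\subseteq L$ with $\on{Frac}(R)=L$ and a finite morphism $\overline{\mathcal E}\to\overline U_R:=\overline U\times_k\spec R$ whose generic fibre is $\overline E\to\overline U_L$; note $\overline{\mathcal E}\to\spec R$ is automatically proper, being a finite morphism followed by the proper morphism $\overline U_R\to\spec R$. After replacing $\spec R$ by a dense open subscheme I may assume that: (i) $\overline{\mathcal E}\to\spec R$ is smooth with geometrically connected fibres (its generic fibre $\overline E$ is a smooth geometrically connected curve over $L$, and, for the proper flat morphism $\overline{\mathcal E}\to\spec R$, these are open conditions on the base); (ii) $\mathcal E:=\overline{\mathcal E}\times_{\overline U_R}U_R\to U_R$ is finite \'etale and, after spreading out the $G$-action, Galois with group $G$ and quotient $U_R$; and (iii) for every $d\in D$ and every point $v\in\spec R$, the ramification index of $\overline{\mathcal E}_v\to\overline U_v$ over $d_v$ equals $e_d$. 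The assertion (iii) is the only one requiring comment, and I take it up at the end.

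Then I would specialize. Since $R$ is a finitely generated algebra over the algebraically closed field $k$, it is Jacobson and every closed point $v_0\in\spec R$ has residue field $k$; such points are dense. Fix one and set $\overline U':=\overline{\mathcal E}_{v_0}=\overline{\mathcal E}\times_{\spec R}\spec k$. By (i), $\overline U'$ is a smooth proper connected curve over $k$; the morphism $\overline U'\to\overline U$ is finite (a base change of $\overline{\mathcal E}\to\overline U_R$) and nonconstant, hence a connected finite cover; by (ii) it is \'etale over $U$, where it restricts to the base change $\mathcal E_{v_0}\to U$ of a finite \'etale morphism (and is in fact Galois with group $G$); and by (iii) its ramification index over each $d\in D$ equals $e_d$, which agrees with the ramification index of $\overline E$ over the corresponding point $d_L$ of $D_L$. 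So $\overline U'$ has all the required properties.

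The main point to get right is (iii): that the ramification indices are genuinely preserved, not merely bounded, under specialization. The key is that, having already shrunk $\spec R$ so that the \emph{total space} $\overline{\mathcal E}$ is smooth over $R$, no special fibre can develop a cusp-type singularity forcing the ramification to jump; concretely, pulling the relative Cartier divisor $d_R$ on the smooth curve $\overline U_R/R$ back along the finite morphism $\overline{\mathcal E}\to\overline U_R$ gives a relative effective Cartier divisor on $\overline{\mathcal E}/R$ (flat over $R$ by miracle flatness), and the isomorphism type of its Artinian fibre is constructible in $v$, hence constant over a dense open; since over the generic point this fibre is $\prod_{\mathfrak q\mid d_L}L[t]/(t^{e_d})$, one gets (iii). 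In the situation actually needed for \autoref{theorem:isomorphism-on-algebraically-closed-base-change} the cover $E$ comes from a prime-to-$p$ quotient, so $e_d$ is prime to $p$ and $\overline E\to\overline U_L$ is tame along $D_L$; then (iii) is immediate from Abhyankar's lemma, which (after a further shrink) makes $\overline{\mathcal E}\to\overline U_R$ \'etale-locally of the form $s\mapsto t^{e_d}$ near $d_R$, and this specializes verbatim. Note that we never require $\overline U'\to\overline U$ itself to be tame — only this constructibility of its ramification indices.
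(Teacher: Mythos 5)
Your argument is, in substance, the same ``spread out and specialize'' proof the paper gives: descend the cover to a finitely generated $k$-subalgebra of $L$, use openness/constructibility on the base to preserve connectedness and the ramification indices, and specialize to a closed point, which has residue field $k$. Two points need attention. First, your opening assertion that there is a finitely generated $k$-subalgebra $R\subseteq L$ with $\on{Frac}(R)=L$ is false unless $L=k$: an algebraically closed field that is finitely generated as a field extension of $k$ must equal $k$. The correct (and standard) statement is that the cover, the $G$-action, and the finitely presented properties you list descend to \emph{some} finitely generated $R\subset L$, the original data being recovered by base change along $K(R)\ra L$; accordingly ``generic fibre'' must be read as the fibre over $K(R)$, and the properties (i), (ii) and the ramification indices at the sections coming from $D$ are transferred to that generic fibre by a further limit argument (possibly enlarging $R$), not read off directly over $L$. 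This is routine to repair and does not change the structure of your proof, but as written the step does not parse.

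Second, your general justification of (iii) --- that the isomorphism type of the Artinian fibre of the pulled-back relative Cartier divisor is a constructible function of $v$ --- is not an off-the-shelf fact and would need an argument in the wildly ramified case (constancy of the total length is clear from flatness, but the partition into local multiplicities can a priori change under specialization). As you yourself note, the covers relevant to \autoref{theorem:isomorphism-on-algebraically-closed-base-change} are prime to $p$, hence tame along $D_L$, and there Abhyankar's lemma gives the local form $s\mapsto t^{e_d}$ after \'etale localization, which spreads out and settles (iii); this tame case is all that is needed, and it is also all that the paper's own proof really handles (the paper identifies the ramification order with the degree of the relative sheaf of differentials at the point, likewise a tame-flavored statement). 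With the spreading-out statement corrected and (iii) run through the Abhyankar/tame route, your proof is correct and is essentially the paper's argument, with the cosmetic difference that you spread out the compactified cover $\overline E\ra\overline U_L$ directly, whereas the paper spreads out the \'etale cover of $U$ and compactifies by normalization afterwards.
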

	The idea of this proof is to ``spread out and specialize'' $E$. See
	\eqref{equation:specialization-setup} for a diagram.
	\begin{proof}
		To construct $\overline{U}'$, we can find a finitely generated $k$-subalgebra
	$A \subset L$ and a finite \'etale cover $E_A \ra U_A$, over $A$
	so that $(E_{A})_L \simeq E$ and $E_A \to U_A$ is finite \'etale Galois
	and tame.
	Let $\overline E_A$ denote the normalization of $\overline U_A$ along
	$E_A \to U_A$.
	Note that, because of the Galois condition,
	the ramification index of a point of $\overline E_A$ over a point of
	$\overline U_A$ only
	depends on the image point in $\overline U_A$. We may therefore speak of the
	ramification index over a point of $\overline U_A$.
	Since $k$ is algebraically closed, for any field $K \supset k$, the irreducible components of
	$D_K$ arise uniquely from the irreducible components of $D$ under scalar
	extension. We freely use the above observations
	in what follows.

	Let $K(A)$ denote the fraction field of $A$.
	Note that the ramification index of $E_{K(A)}$ over each point of $D_{K(A)}$ 
	agrees with that of $E$ over the corresponding point of $D_L$.
	Further, we claim that for a general closed point $s$ of $\spec A$,
	the ramification index of $s \times_{\spec A} E_A$ over a point of $s \times_{\spec A} D_A \simeq D$
	agrees with the ramification index of $E_{K(A)}$ over the corresponding generic point of $D_{K(A)}$.
	To see why this ramification index $n$ is constant over an open set of $\spec A$, 
	recall that we are assuming the cover $E \to U$ is tame, and so, after
	possibly shrinking $\spec A$, we may assume the same of $E_A \to U_A$.
	By the tameness hypothesis, the ramification index over a point can be identified with one more 
	than the degree of the relative sheaf of differentials at that point,
	see, for example, \cite[p. 592]{vakil:foundations-of-algebraic-geometry-2017}.
	(The point here is that if the map is locally of the
	form $t \mapsto us^n$, for $t$ and $s$ uniformizers and $u$ a unit, then the
	derivative is $dt = d(us^n) = un s^{n-1} ds + s^n du$, which has
	order precisely $n-1$ if $n$ is not divisible by the
	characteristic.)
	So, for $p \in D$ a geometric point, under the identification $p_A \simeq \spec A$, we see that at any point
	of $\overline{E}_A \times_{\overline{U}_A} p_A$ over the generic point
	of $\spec A$, 
	$\Omega_{\overline{E}_A	\times_{\overline{U}_A} p_A / p_A}$ has degree $n-1$.
	It follows that there is a nonempty open subscheme of $\spec A$ where $\Omega_{\overline{E}_A	\times_{\overline{U}_A} p_A / p_A}$
	has degree $n-1$.
	Hence, the morphism has inertia of order $n$ over some open subscheme of $\spec A$.

	Since $k$ is an algebraically closed field, every closed point of $\spec A$ has residue field $k$, so we may
	choose such a closed point $t \colon \spec k \ra \spec A$
	with the same ramification indices over $D$ as $E$ has over the corresponding points of $D_L$.
	Since the locus of geometric points on the base $\spec A$ where the map $E_A \ra U_A$ is a
	map of connected schemes is constructible \cite[Corollaire
	9.7.9]{EGAIV.3}, we may also assume
	the fiber of $E_A \ra U_A$ over $t: \spec k \to \spec A$ is connected.
	Then, $U' := E_A \times_{\spec A} \spec k$ is our desired connected finite \'etale cover.
	Finally, we take $\overline{U}'$ to be the normalization of
	$\overline{U}$ along $U' \to U$.
	\end{proof}
	
	Summarizing the situation of \autoref{lemma:ramification-order-cover},
	we obtain the commutative diagram
\begin{equation}
	\label{equation:specialization-setup}
	\begin{tikzcd} 
		E \ar{r} \ar{d} & E_A \ar{d} & U'\ar{l}\ar{d} \\
		U_L \ar{d} \ar{r} & U_A \ar{d}  & U \ar{l}\ar{d} \\
		\spec L \ar{r} & \spec A & \spec k \ar[swap]{l}{t},
	\end{tikzcd}\end{equation}
where the four squares are fiber products.

\begin{notation}
\label{subsubsection:notation-U'}
Let $\overline U' \to \overline{U}$ denote the finite Galois cover of
\autoref{lemma:ramification-order-cover}.
	Let $\overline E'$ denote the normalization of $\overline E$ in $\overline E \times_{\overline U_L} \overline U_L'$ 
	and let $E' := \overline E' \times_{\overline U_L'} U_L'$,
	as in the commutative diagram
	  \[\begin{tikzcd}[row sep={40,between origins}, column sep={40,between origins}]
      & E' \ar{rr}\ar{dd}\ar{dl} & & \overline E' \ar{dd}\ar{dl} \\
    E \ar[crossing over]{rr} \ar{dd} & & \overline E \\
      & U_L' \ar{rr} \ar{dl} & & \overline U_L' \ar{dl} \\
      U_L \ar{rr} && \overline U_L \ar[from=uu,crossing over] & & D_L. \ar{ll}
\end{tikzcd}\]
\end{notation}

\begin{remark}
	\label{remark:abhyankar}
	Observe that the finite map $\overline U' \ra \overline U$ of
	\autoref{subsubsection:notation-U'} restricts to $U' \ra U$ over $U \subset \overline U$ as $U$ is normal.
By Abhyankar's lemma
\cite[A I.11]{FreitagK:lectures-etale}
(see also \cite[Expose XIII, 5.2]{noopsortSGA1Grothendieck1971})
we obtain that $\overline U'$ is regular, hence smooth, as we are working over an algebraically closed field $k$.
\end{remark}

Although the normalization $\overline{E} \ra \overline{U}_L$ of $\overline{U}_L$ in $E \ra U_L$
is not necessarily \'etale, we now show the finite surjection $\overline E' \ra \overline U_L'$ is \'etale.
\begin{lemma}
	\label{lemma:projective-cover-is-etale}
	With notation as in \autoref{subsubsection:notation-E} and
	\autoref{subsubsection:notation-U'},
	$\overline E' \ra \overline U_L'$ is \'etale.
\end{lemma}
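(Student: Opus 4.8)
The plan is to apply Abhyankar's lemma again, this time over $L$. We have already arranged, via Lemma~\ref{lemma:ramification-order-cover}, that $\overline{U}' \to \overline{U}$ is tamely ramified over the generic points of $D$ with ramification indices matching those of $\overline{E} \to \overline{U}_L$ over the corresponding points of $D_L$; after base change to $L$ the cover $\overline{U}_L' \to \overline{U}_L$ has the same ramification data, and since $E \to U_L$ is prime-to-$p$ (it factors through $\pi_1^{(p)}$), the cover $\overline{E} \to \overline{U}_L$ is tame as well. The relevant local picture is thus: at a point of $D_L$, both $\overline{U}_L' \to \overline{U}_L$ and $\overline{E} \to \overline{U}_L$ are tamely ramified, with the \emph{same} ramification index $e$. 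First I would reduce to checking étaleness at points lying over $D_L$, since over $U_L' \subset \overline{U}_L'$ the map restricts to $E \times_{U_L} U_L' \to U_L'$, which is a base change of the étale map $E \to U_L$ and hence étale.

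Next I would work locally on $\overline{U}_L$ at a point $p$ of $D_L$, with $R := \so_{\overline{U}_L, p}$ a DVR with uniformizer $t$ and residue field $L$ (algebraically closed). Over the strict henselization, $\overline{U}_L' \to \overline{U}_L$ is locally the normalization of $R$ in $R[t^{1/e}]$, i.e.\ we adjoin an $e$-th root of the uniformizer, and similarly each branch of $\overline{E} \to \overline{U}_L$ over $p$ is obtained by adjoining an $e$-th root of $t$ (possibly times a unit, but since $L$ is algebraically closed and we are over a strictly henselian base, the unit can be absorbed). The normalization of $\overline{E}$ in the fiber product $\overline{E} \times_{\overline{U}_L} \overline{U}_L'$ is then locally the normalization of $R[t^{1/e}]$ in $R[t^{1/e}] \otimes_R R[t^{1/e}]$; but adjoining $t^{1/e}$ to a ring that already contains $t^{1/e}$ is an isomorphism onto a connected component, so $\overline{E}' \to \overline{U}_L'$ is an isomorphism locally over each such point, in particular étale there. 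Equivalently: the ramification of $\overline{U}_L' \to \overline{U}_L$ is chosen precisely to ``cancel'' the ramification of $\overline{E} \to \overline{U}_L$, which is the classical statement of Abhyankar's lemma.

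Concretely, I would phrase this as a direct invocation of Abhyankar's lemma \cite[A I.11]{FreitagK:lectures-etale}: since $\overline{E} \to \overline{U}_L$ is tamely ramified along $D_L$ with ramification index $e_i$ at the $i$-th component, and $\overline{U}_L' \to \overline{U}_L$ is (finite, and) ramified along $D_L$ with ramification index divisible by $e_i$ at the $i$-th component — indeed equal to it, by our construction — the base change $\overline{E} \times_{\overline{U}_L} \overline{U}_L'$ has a normalization that is étale over $\overline{U}_L'$. The only point requiring care is that Abhyankar's lemma in the form cited applies to a regular base with a normal crossings divisor, so I should note that $\overline{U}_L$ is regular (being smooth over $L$, as $\overline{U}$ is smooth over $k$) and that $D_L$, being a reduced divisor on a smooth curve, is automatically normal crossings; I would also note $\overline{U}_L'$ is regular by the already-cited application of Abhyankar's lemma over $k$ together with the fact that regularity is preserved under the base change $\operatorname{Spec} L \to \operatorname{Spec} k$ between algebraically closed fields (or re-derive its regularity directly over $L$). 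The main obstacle is bookkeeping: making sure the ramification indices genuinely match (not merely that one divides the other) so that the conclusion is étaleness rather than just ``tameness with smaller ramification,'' and handling the possibility that $\overline{E} \to \overline{U}_L$ or $\overline{U}_L' \to \overline{U}_L$ is disconnected or has several branches over a point of $D_L$ — but since the statement is local and étaleness can be checked branch by branch, this does not cause trouble.
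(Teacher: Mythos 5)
Your proposal is correct and follows essentially the same route as the paper: reduce to points over $D_L$, pass to the strict henselization of the local ring at such a point, and use the matching tame ramification indices together with Abhyankar's lemma to see that the normalized pullback $\overline E' \ra \overline U_L'$ is étale (the paper just packages the strictly henselian computation $A[\pi^{1/n}]\otimes_A A[\pi^{1/n}]\simeq \oplus_{i=1}^n A[\pi^{1/n}]$ as a separate lemma). The only cosmetic difference is that equality of ramification indices is more than needed — divisibility of the index of $\overline U_L'$ by that of $\overline E$ already gives étaleness — but since the construction provides equality, your argument is fine as written.
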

\begin{proof}
Since $E' \to U_L'$ is \'etale by construction, it is enough to check $\overline E' \ra \overline U_L'$
is \'etale over all points of $\overline U_L'$ lying above a point of $D_L$.
Indeed, this is where we crucially use the assumption that $E \ra U$ is tame.
Since being \'etale can be checked
in the local ring at each such point, \'etaleness of $\overline E' \ra\overline U_L'$ follows from 
a version of Abhyankar's lemma, using that the ramification orders of $\overline U_L' \to
\overline U_L$ and $\overline E \to \overline U_L$ agree over each point of
$D_L$, by \autoref{lemma:ramification-order-cover}. For a precise form of Abhyankar's lemma applicable in this setting, see, for example,
\cite[Tag 0EYH]{stacks-project}.
\end{proof}

We are now prepared to complete the curve case of \autoref{theorem:isomorphism-on-algebraically-closed-base-change}.
\begin{proposition}
	\label{proposition:curve-case}
	\autoref{theorem:isomorphism-on-algebraically-closed-base-change} holds
	in the case that $U$ is a smooth connected curve.
\end{proposition}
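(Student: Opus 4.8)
The plan is to deduce \autoref{proposition:curve-case} from \autoref{lemma:surjectivity}, \autoref{lemma:ramification-order-cover} and \autoref{lemma:projective-cover-is-etale}, together with the special case of \autoref{theorem:isomorphism-on-algebraically-closed-base-change} in which the curve is already \emph{smooth and projective}. That projective curve case I would treat as known input: it follows from Grothendieck's description of the prime to $p$ fundamental group of a proper smooth curve, see \cite[Expos\'e X]{noopsortSGA1Grothendieck1971}.

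Surjectivity of $\pi_1^{(p)}(U_L) \ra \pi_1^{(p)}(U)$ is \autoref{lemma:surjectivity}, so the work is injectivity, and as explained in \autoref{subsubsection:smooth-case} it suffices to fix a Galois connected finite \'etale cover $E \ra U_L$ of degree prime to $p$ and produce a connected finite \'etale cover $E'_k \ra U$ with $(E'_k)_L \ra U_L$ factoring through $E$. Adopting the notation of \autoref{subsubsection:notation-E} and \autoref{subsubsection:notation-U'}, I would first run \autoref{lemma:ramification-order-cover} to get a connected finite cover $\overline U' \ra \overline U$, \'etale over $U$, with the same ramification orders over $D_L$ as $\overline E \ra \overline U_L$; after replacing $U'$ by its normalization in a finite extension we may assume $K(U')/K(U)$ Galois, and Abhyankar's lemma then makes $\overline U'$ smooth and projective over $k$. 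Then I would form $\overline E'$, the normalization of $\overline E$ in $\overline E \times_{\overline U_L} \overline U'_L$, and $E' := \overline E' \times_{\overline U'_L} U'_L$, and apply \autoref{lemma:projective-cover-is-etale} to conclude that $\overline E' \ra \overline U'_L$ is a finite \'etale cover of the smooth projective $L$-curve $\overline U'_L = \overline U' \times_{\spec k} \spec L$, of degree $\deg(E)$ and hence prime to $p$.

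Now I would invoke the smooth projective curve case of \autoref{theorem:isomorphism-on-algebraically-closed-base-change} for the curve $\overline U'$ over $k$ to descend $\overline E'$: it provides a finite \'etale cover $\overline E'_k \ra \overline U'$ with $(\overline E'_k)_L \cong \overline E'$ over $\overline U'_L$. Restricting over the dense open $U' \subseteq \overline U'$ gives a finite \'etale cover $E'_k := \overline E'_k \times_{\overline U'} U'$ of $U'$ with $(E'_k)_L \cong E'$ over $U'_L$, and composing with the finite \'etale map $U' \ra U$ (available since $\overline U' \ra \overline U$ restricts to $U' \ra U$ over $U$) yields a finite \'etale cover of $U$. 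The normalization in the definition of $\overline E'$ comes with a finite surjection $\overline E' \ra \overline E$; I would chase the fibre-product squares to check that over $U_L$ this restricts to a finite \'etale surjection $E' \ra E$ compatible with the maps to $U_L$, so that $(E'_k)_L = E' \ra E$ is a morphism over $U_L$. Finally, since $E$ is connected and $E' \ra E$ is surjective, some connected component $Z$ of $E'_k$ has $Z_L$ surjecting onto $E$; then $Z \ra U$ is a connected finite \'etale cover with $Z_L \ra U_L$ factoring through $E$, completing the argument.

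The hard part will not be any single computation but rather the plumbing: first, citing the correct classical statement for the smooth projective curve case so that the argument is not circular; and second, the fibre-product bookkeeping needed to see that $\overline E' \ra \overline E$ really restricts to $E' \ra E$ over $U_L$ and that $(E'_k)_L \cong E'$. The substantive arithmetic --- Abhyankar's lemma and the local \'etaleness computation of \autoref{lemma:local-computation}, where the hypothesis that $\deg E$ is prime to $p$ is used --- has already been packaged into \autoref{lemma:ramification-order-cover} and \autoref{lemma:projective-cover-is-etale}, so here it only needs to be applied.
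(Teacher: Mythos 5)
Your proposal is correct and follows essentially the same route as the paper: run \autoref{lemma:ramification-order-cover} and \autoref{lemma:projective-cover-is-etale} to get the finite \'etale cover $\overline E' \ra \overline U'_L$, descend it to $k$ by the proper case of the theorem, restrict to $U'$, and compose with the finite \'etale map $U' \ra U$. The only differences are cosmetic: the paper cites \cite[Tag 0A49]{stacks-project} rather than SGA1, Expos\'e X, for the projective descent step (the needed statement there is the invariance of the category of finite \'etale covers under extension of algebraically closed fields for proper schemes, not merely the abstract description of the prime-to-$p$ group), and your extra care with connectedness of $E'_k$ and the map $E' \ra E$ fills in bookkeeping the paper leaves implicit.
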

\begin{proof}
	Let $U$ be a smooth connected curve.
	We use notation from \autoref{subsubsection:notation-E} and \autoref{subsubsection:notation-U'}.
	By \autoref{lemma:surjectivity}, we only need to check injectivity of
	$\pt(U_L) \to \pt(U)$.
	Since $E' \ra U_L$ is a finite \'etale cover of $U_L$ dominating $E \ra U_L$,
	to complete the proof in the case that $U$ is a smooth curve, it suffices to
	show $E' \ra U_L$ is the base change of some tame finite \'etale cover $F'
	\ra U$ over $k$.
	Note here that tameness of $F' \to U$ is
	automatic once we show it base changes to $E' \ra U_L$,
	as tameness can be verified after base extension.
	We showed in \autoref{lemma:projective-cover-is-etale} that $\overline E' \rightarrow \overline U'_L$ is a finite \'etale cover.
	Since $\overline U'$ is projective and normal, by 
	\cite[Th\'eor\`eme 3]{langS:sur-les-revetements},
	we obtain that there is some finite \'etale cover $\overline F' \ra
	\overline U'$ over $k$ with $\overline E' \simeq \left( \overline F' \right)_L$.
	(Alternatively, see \cite[Proposition
	5.6.7]{szamuely:galois-groups-and-fundamental-groups},
	\cite[Expos\'e X, Corollaire 1.8]{sga1},
and \cite[Tag 0A49]{stacks-project}.)
	We then find $F' := \overline F' \times_{\overline U'} U'$ is a finite
	\'etale cover of $U$ satisfying $(F')_L	\simeq E'$, and so this is the
	desired cover.
\end{proof}

\subsection{Dominating compactifications}

In order to complete the reduction from the higher dimensional case to the curve
case, we will want to know that
\autoref{theorem:isomorphism-on-algebraically-closed-base-change} holds for one
compactification $U \to Y$ whenever it holds for another
compactification $U \to X$ with a compatible map $X \to Y$.
The next couple lemmas are devoted to verifying this.
\begin{lemma}
	\label{lemma:tameness-on-covering-compactification}
	Suppose $W$ is a connected smooth separated finite type scheme over a field $k$
	and $\beta: W \subset X$ and $\alpha: W \subset Y$ are two normal
	compactifications with a map $f: X \to Y$ so that $\alpha = f \circ
	\beta$.
	If a finite \'etale Galois cover $E \to W$ is tame with respect to
	$\alpha$, it is also tame with respect to $\beta$.
\end{lemma}
\begin{proof}
	Tameness can be checked after field extension, so we will assume $k$ is
	algebraically closed.
	Fix a point $s \in Y$ and a preimage $t \in X$ with $f(t) = s$.
	Let $F_Y$ denote the normalization of $Y$ in the function field of $E$
	and let $F_X$ denote the normalization of $X$ in the function field of
	$E$. We assume $F_Y$ is tame over $s$ and wish to show $F_X$ is tame
	over $t$.

	We next claim that there is a map $F_X \to F_Y$.
	Let $F$ denote the normalization of $F_Y \times_Y X$. It is enough to
	show the natural map $F \to F_X$ induced by the universal property of
	normalization is an isomorphism, as we then obtain a map $F_X \simeq F
	\to F_Y \times_Y X \to F_Y$. Because the normalization map is finite
	by
\cite[\href{https://stacks.math.columbia.edu/tag/03GR}{Tag 03GR}]{stacks-project}
and
\cite[\href{https://stacks.math.columbia.edu/tag/035B}{Tag
035B}]{stacks-project},
both $F$ and $F_X$ are finite over $X$.
Therefore, the map $F \to F_X$ is a birational map which is finite (because it
is quasi-finite and proper) between normal schemes over
$k$. It follows from a version of Zariski's main theorem that $F \to F_X$ is an
isomorphism \cite[\href{https://stacks.math.columbia.edu/tag/0AB1}{Tag
0AB1}]{stacks-project}.

We now conclude the proof.
Let $v$ be a point of $F_X$ over $t$ and $u \in F_Y$ be the image of $v$ under
the map $F_X \to F_Y$.
Since $v$ maps to $u$, we have an inclusion of decomposition groups $D_{v,F_X/X}
\subset D_{u, F_Y/Y}$.
Since we are assuming $k$ is algebraically closed, this is identified with an
inclusion of inertia groups $I_{v,F_X/X} \subset I_{u, F_Y/Y}$.
Hence, up to conjugacy, the inertia group at $t$ is a subgroup of the inertia
group at $s$ and so tameness at $s$ implies tameness at $t$.
\end{proof}

\begin{lemma}
	\label{lemma:dominating-compactification}
	With the same notation as in
	\autoref{lemma:tameness-on-covering-compactification}, if
	\autoref{theorem:isomorphism-on-algebraically-closed-base-change} holds
	with respect to the normal compactification $W \subset X$,
	\autoref{theorem:isomorphism-on-algebraically-closed-base-change} also
	holds with respect to the compactification $W \subset Y$.
\end{lemma}
\begin{proof}
	By \autoref{lemma:surjectivity}, it suffices to verify injectivity for
	the map $\pt(W_L) \ra \pt(W)$ with respect to the compactification $W
	\subset Y$.
	Using 
	\cite[Corollary 5.5.8]{szamuely:galois-groups-and-fundamental-groups},
	we can rephrase this as showing that if $k \subset L$ is an
	extension of algebraically closed fields and $E \to W_L$ is any
	tame (with respect to $W \to Y$)
	finite \'etale Galois cover, then $E$ arises as the base
	change of a cover $F \to W$ over $k$.
	By
	\autoref{lemma:tameness-on-covering-compactification},
	this cover is also tame with respect to the normal compactification $W \to X$.
	By assumption
	\autoref{theorem:isomorphism-on-algebraically-closed-base-change} holds
	for the compactification $W \to X$, and so 
	$E \to W_L$ is the base
	change of a cover $F\to W$ over $k$, as we wished to show.
\end{proof}

\subsection{Proof of injectivity in the smooth and quasi-projective case}
\label{subsection:induction}
In this section, specifically in
\autoref{proposition:smooth-quasi-projective-case},
we prove \autoref{theorem:isomorphism-on-algebraically-closed-base-change} in
the case that $U$ is a smooth connected quasi-projective variety.
To start, we use Bertini's theorem to obtain a fibration away from a codimension
$2$ subset of $U$. This fibration will allow us to run an induction on the
dimension.

\begin{proposition}
	\label{proposition:smooth-morphism-reduction}
	Let $U$ be a smooth connected quasi-projective variety of
	dimension $d> 1$.
	Choose a projective normal compactification $U \subset \overline{U}$.
	There is a closed subscheme $Z \subset U$ of codimension at least $2$
	and a 
	projective normal compactification $U - Z \to X$ satisfying the
	following three properties:
	\begin{enumerate}
		\item The closed subscheme $Z$ lies in the smooth locus of
			$\overline{U}$.
		\item 	There is a map $X \to
	\overline{U}$ so that the composition $U - Z \to X \to \overline{U}$ agrees with the
	composition $U - Z \to U \to \overline{U}$.
\item	There is a 
	dominant generically smooth map $\alpha \colon X \to
	\bp^{d-1}_k$ with geometrically irreducible generic fiber.
	\end{enumerate}
\end{proposition}
\begin{proof}
	Let
$U \subset \overline U$ be the given projective normal compactification.
Choose an embedding $U \subset \overline U \subset
	\mathbb P^n_k$.
Replacing $\mathbb P^n_k$ by the span of $\overline U$ in $\mathbb P^n_k$, we
may also assume $\overline U$ is nondegenerate.
Choose a 
general codimension $d$ plane $H \subset \bp^n_k$
such that $H \cap \overline U$ is smooth of dimension $0$,
$H \cap (\overline U  -U) = \emptyset$,
and so that, if $J' \subset \bp^n_k$ is a general codimension $d-1$ plane containing $H$,
we have $J' \cap \overline U$ is smooth and geometrically irreducible of
dimension $1$. This is possible because $\overline U$ is normal, hence smooth
away from codimension $2$, and by Bertini's
theorem, as in
\cite[Theoreme 6.10(2) and (3)]{jouanolou1982theoremes}.

Define $Z:= H \cap U = H \cap \overline{U}$ for $H$ as in the previous paragraph.
For $H$ general as above, the following three conditions are satisfied:
$Z \subset U$ has codimension at
least $2$,
$Z$ does not meet $\overline{U} - U$,
and, for a general plane $J'$ containing $H$,
the intersection $J' \cap \overline U$ is smooth and geometrically irreducible.
The second property verifies condition $(1)$ in the statement because it shows $Z \subset U \subset
\overline{U}$ and $U$ is contained in the smooth locus of $\overline{U}$.
Take $X \to \overline{U}$ to be the blow up of $\overline{U}$ along $Z \subset \overline U$.
This verifies condition $(2)$ in the statement.

To conclude, we will show condition $(3)$ in the statement holds. Namely, we
will show there is a dominant map $X \to \bp^{d-1}_k$
whose generic fiber is smooth and geometrically irreducible.
Geometrically, this map is induced by projection of $\overline U$ away from the plane $H$, and
sends a point $x \in U-Z$ to $\on{Span}(x,
H)$, where we view $\on{Span}(x, H)$ as a point of $\mathbb P^{d-1}_k$
parameterizing codimension
$d-1$ planes $J' \subset \bp^{n}_k$ containing $H$.
The above-described map $U-Z \to \bp^{d-1}_k$ extends to a map on the blow up $X =
\on{Bl}_{\overline U \cap H} \overline U \to \bp^{d-1}_k$,
where the fiber over a point $[J'] \in \bp^{d-1}_k$ (parameterizing codimension
$d-1$ planes $J' \subset \bp^{n}_k$ containing $H$) is $J' \cap \overline U$.
By construction of $H$ so that $J' \cap \overline U$ is smooth and geometrically
irreducible for a general codimension $d - 1$ plane $J' \subset \bp^{d-1}_k$
containing $H$, the generic fiber
of the map $X \to \mathbb P^{d-1}_k$ is smooth and geometrically
irreducible.
\end{proof}

Assuming we have a fibration as in \autoref{proposition:smooth-morphism-reduction}, we next show that the fiber
of a tame Galois finite \'etale cover $E \ra U_L$,
when restricted to the generic point of $\mathbb P^{d-1}_L$, is the base change of a Galois 
finite \'etale cover over the generic point of $\mathbb P^{d-1}_k$.

\begin{proposition}
	\label{proposition:base-change-generic-fiber}
	Assume $U$ is a smooth connected $k$-variety of dimension $d \geq 1$
	with a normal projective compactification $U \to \overline U$ and a dominant generically smooth map
	$\alpha \colon \overline U \ra \bp^{d-1}_k$ with geometrically irreducible generic
	fiber.
Let $\eta_k$ denote the generic point of $\bp^{d-1}_k$ and $\eta_L$ denote the geometric
generic point of $\bp^{d-1}_L$.
Any given tame finite \'etale Galois cover $E \to U_L$ restricts to a Galois
finite \'etale cover $E_{\eta_L} \ra U_{\eta_L}$ (with respect to the
compactification $U_{\eta_L}
\subset \overline U_{\eta_L}$)
which is the base change of some Galois finite \'etale cover $F_{\eta_k} \ra U_{\eta_k}$.
\end{proposition}
\begin{proof}
Let $\overline \eta_k$ and $\overline \eta_L$ denote 
compatible algebraic geometric generic points of $\mathbb P^{d-1}_k$ and $\mathbb
P^{d-1}_L$, with
corresponding generic points $\eta_k$ and
$\eta_L$.
By this, we mean that $\overline \eta_k$ has residue field which is the algebraic closure of
$\kappa(\eta_k)$ and similarly for $L$. 
Moreover, they are compatible in the sense that we specify an embedding
$\kappa(\overline \eta_k) \to \kappa(\overline \eta_L)$ restricting to the
inclusion $\kappa (\eta_k) \to \kappa (\eta_L)$.
Let $E_{\overline \eta_L} := E \times_{\bp^{d-1}_L} \overline \eta_L$, which we
note is smooth and of dimension $1$.
Because $E \to U_L$ is tame with respect to $U_L \to \overline U_L$, we obtain
that $E_{\overline \eta_L} \to U_{\overline \eta_L}$ is tame with respect to
$U_{\overline \eta_L} \to \overline U_{\overline \eta_L}$.
By the curve case of \autoref{theorem:isomorphism-on-algebraically-closed-base-change}, shown in
\autoref{proposition:curve-case},
$E_{\overline \eta_L}$ arises as the base change of some cover
$F_{\overline \eta_k} \ra U_{\overline \eta_k}$.
That is, $(F_{\overline \eta_k})_{\overline \eta_L} \simeq E_{\overline \eta_L}$.

To conclude the proof, we only need realize $F_{\overline \eta_k} \ra U_{\overline \eta_k}$
as the base change of a map over $\eta_k$ so that the above isomorphism
$(F_{\overline \eta_k})_{\overline \eta_L} \simeq E_{\overline \eta_L}$
is the base change of an isomorphism over $\eta_L$.
For $K$ a field, we use $K^s$ to denote its separable closure.
We can realize $\overline \eta_k \ra \eta_k$ as the composition of a purely inseparable
morphism $\overline \eta_k \ra \eta_k^s$ and a separable morphism $\eta_k^s \ra \eta_k$ by taking
$\eta_k^s := \spec \kappa(\eta_k)^s$. 
Since $\overline \eta_k \ra \eta_k^s$ is a universal homeomorphism, the same is true of
$U_{\overline \eta_k} \ra U_{\eta_k^s}$, and so the map induces an isomorphism of \'etale fundamental
groups
$\pi_1(U_{\overline \eta_k}) \ra \pi_1(U_{\eta_k^s})$ \cite[Tag 0BQN]{stacks-project}.
It follows that $F_{\overline \eta_k} \ra U_{\overline \eta_k}$ is the base change of a morphism
$F_{\eta_k^s} \ra U_{\eta_k^s}$ over $\eta_k^s$.
Moreover, by spreading out, there is a finite Galois extension
$\eta_k' \to \eta_k$ so that
$F_{\eta_k^s} \ra U_{\eta_k^s}$
is the base change of a morphism
$F_{\eta_{k}'} \ra U_{\eta_{k}'}$ over $\eta_{k}'$.
We next want to verify this is the base change of a map over $\eta_k$,
which we will do by producing descent data along the extension $\eta_k' \to
\eta_k$.

We next set up notation for descent data.
Observe that $\eta_k \simeq \spec k(x_1,
\ldots, x_n)$ and $\eta_L \simeq \spec L(x_1, \ldots, x_n)$.
Let $M := \Gamma(\eta_k', \mathscr O_{\eta_k'})$ so that $\eta_k' = \spec M$.
It follows that the two maps of schemes $\eta_k' \to \eta_k$ and $\eta_L \ra \eta_k$
correspond to the extensions of fields
$k(x_1, \ldots, x_n) \to M$ and $k(x_1, \ldots, x_n) \ra L(x_1, \ldots, x_n)$.
It is a standard fact that these are linearly disjoint, see \autoref{lemma:linearly-disjoint}.
Let $M_L := M \otimes_k L$.
Since $M$ and $L(x_1, \ldots, x_n)$ are linearly disjoint,
base extension defines a bijective map 
\begin{align*}
	\gal(M/k(x_1, \ldots, x_n)) \simeq
\gal(M_L/L(x_1, \ldots, x_n)).
\end{align*}
We denote the above Galois group by $G$.
As described in
\cite[\S6.2, Example B]{BoschLR:Neron},
specifying descent data for
$F_{\eta_k'}\to U_{\eta_k'}$ along $\eta_k' \to \eta_k$,
is equivalent to specifying
an isomorphism $\phi_{F,k,\sigma}: F_{\eta_k'} \to
F_{\eta_k'}$
for each $\sigma \in G$,
defining an action of $G$ on $F_{\eta_k'}$. (We warn
	the reader that the action
is only defined over $\eta_k$ and not over $\eta_k'$.)
Since $U_{\eta_k'}$ is the base change of $U_{\eta_k}$, we do have descent data
$\phi_{U,k,\sigma} : U_{\eta_k'} \to U_{\eta_k'}$.
The descent data $\phi_{F,k,\sigma}$ we wish to produce should live over the descent data for
$\phi_{U,k,\sigma}$, in the sense the diagram
\begin{equation}
	\label{equation:}
	\begin{tikzcd} 
		F_{\eta_k'} \ar {r}{\phi_{F,k,\sigma}} \ar {d} & F_{\eta_k'} \ar {d} \\
			U_{\eta_k'} \ar {r}{\phi_{U,k,\sigma}} & U_{\eta_k'}
\end{tikzcd}\end{equation}
should commute.
Let $\eta_L' := \eta_k' \times_{\eta_k} \eta_L$.
Since we do have descent data for $F_{\eta_L'} \to U_{L'}$ along $\eta_L' \to
\eta_L$,
we have $\phi_{F,L,\sigma}$ and $\phi_{U,L,\sigma}$ so that
\begin{equation}
	\label{equation:}
	\begin{tikzcd} 
	F_{\eta_L'} \ar {r}{\phi_{F,L,\sigma}} \ar {d} & F_{\eta_L'} \ar {d} \\
	U_{\eta_L'} \ar {r}{\phi_{U,L,\sigma}} & U_{\eta_L'}
\end{tikzcd}\end{equation}
commutes.

We wish to show that $\phi_{F,L,\sigma}$ is the base change of a unique map
$\phi_{F,k,\sigma}$ along $\spec L \to \spec k$.
Indeed, consider the $\eta_k$ scheme $\on{Aut}_{\phi_{U,k,\sigma}}(F_{\eta_k'})$ of
automorphisms of $F_{\eta_k'}$ over the specified automorphism 
$\phi_{U,k,\sigma}$ of $U_{\eta_k'}$.
Note that
$\on{Aut}_{\phi_{U,k,\sigma}}(F_{\eta_k'}) \times_{\eta_k} \eta_L \simeq
\on{Aut}_{\phi_{U,L,\sigma}}(F_{\eta_L'})$.
Moreover, for $N \in \{k,L\}$, since the automorphisms of $F_{\eta_N'}$ over $\phi_{U,N,\sigma}$ are given by
composing any given automorphism over $\phi_{U,N,\sigma}$ with an automorphisms of $F_{\eta_N'}$
over $U_{\eta_N'}$, 
$\on{Aut}_{\phi_{U,k,\sigma}}(F_{\eta_k'})$ and
$\on{Aut}_{\phi_{U,L,\sigma}}(F_{\eta_L'})$ are both $G$ torsors.
Since the residue field of
each point of $\on{Aut}_{\phi_{U,k,\sigma}}(F_{\eta_k'})$
over $\eta_k$
is linearly disjoint from the field extension
$\kappa(\eta_k) \to \kappa(\eta_L)$
by \autoref{lemma:linearly-disjoint},
there is a bijection between the points of
$\on{Aut}_{\phi_{U,k,\sigma}}(F_{\eta_k'})$
and $\on{Aut}_{\phi_{U,L,\sigma}}(F_{\eta_L'})$.
Since the latter is the trivial $G$ torsor,
we also obtain
$\on{Aut}_{\phi_{U,k,\sigma}}(F_{\eta_k'})$ is the trivial $G$ torsor.
In other words there is a unique map $\phi_{F,k,\sigma}$ 
over $\phi_{U,k,\sigma}$
whose base change to
$\eta_L$
is $\phi_{F,L,\sigma}$.
Choosing these $\phi_{F,k,\sigma}$ whose base change is
$\phi_{F,L,\sigma}$,
we find that the $\phi_{F,k,\sigma}$ define descent data
(because the
$\phi_{F,L,\sigma}$ do).
Hence, $F_{\eta_k'} \to U_{\eta_k'}$ is the base change of a map
$F_{\eta_k} \to U_{\eta_k}$, as desired.
\end{proof}

We now complete the proof of
\autoref{theorem:isomorphism-on-algebraically-closed-base-change} in the case
$U$ is smooth quasi-projective with a projective normal compactification.
Since $U$ is quasi-projective, recall that such a projective normal compactification exists by
\autoref{remark:tameness-definition}.
\begin{proposition}
	\label{proposition:smooth-quasi-projective-case}
	\autoref{theorem:isomorphism-on-algebraically-closed-base-change} holds
	when $U \to \overline U$ is a projective normal compactification and $U$ is smooth and quasi-projective.
\end{proposition}
\begin{proof}
	The case $d = 1$ holds by \autoref{proposition:curve-case}, and $d = 0$
	is trivial, so we now
	assume $d > 1$.

	By \autoref{proposition:smooth-morphism-reduction}, there is a $Z
	\subset U
	\subset \overline{U}$ and a projective normal compactification $U - Z \to X$
	satisfying the properties given
	there. Then, since $Z$ as in
\autoref{proposition:smooth-morphism-reduction}
has codimension at least $2$,
$\pt(U - Z) \simeq \pt(U)$ because
the tame fundamental group of a smooth variety is unchanged by removing any set of codimension
at least $2$, as shown in \autoref{lemma:removing-codimension-2}. 
Above, the tameness conditions
for both schemes $U - Z$ and $U$ are taken with respect to the projective normal compactification $\overline U$.

Observe that $Z$ is in the smooth locus of $\overline U$ 
by \autoref{proposition:smooth-morphism-reduction}(1)
and $U \to X$ is
a normal compactification of $U$. 
Using \autoref{proposition:smooth-morphism-reduction}(2)
to verify the hypotheses of
\autoref{lemma:dominating-compactification},
it suffices to prove
\autoref{theorem:isomorphism-on-algebraically-closed-base-change} 
for the compactification
$U-Z \to X$ in place of $U \to \overline{U}$.

For the remainder of the proof, we now rename $U-Z$ as $U$ and $X$ as
$\overline{U}$.
In particular, by
\autoref{proposition:smooth-morphism-reduction}(3),
we may now assume there is a generically smooth dominant map
$\overline{U} \ra \bp^{d-1}_k$. 

With notation as in \autoref{proposition:base-change-generic-fiber},
any tame Galois finite \'etale cover $E_L \ra U_L$ restricts to a cover
$E_{\eta_L} \ra U_{\eta_L}$ which is the base change of a tame Galois
finite \'etale cover
$F_{\eta_k} \ra U_{\eta_k}$.

Define $F$ to be the normalization of $U$ in the function field of $F_{\eta_k}$.
We claim that $F_L \simeq E_L$ as covers of $U_L$.
This will complete the proof, as it implies $F \to U$ is tame finite \'etale
and connected, since the same is true of $F_L \to U_L$.

To see $F_L \simeq E_L$ as covers of $U_L$, we know $E_L$ is the normalization of $U_L$ in $K(E_L) = K(E_{\eta_L})$.
Further, since $L/k$ has a separating transcendence basis (since $k$ is algebraically closed, hence perfect), 
it follows that $F_L$ is normal and has function field $K(E_L)$.
Moreover, the universal property of normalization induces a birational map $F_L \to E$.
Since both $F_L$ and $E$ are finite over $U_L$, the map $F_L \to E$ is finite.
It then follows from a version of
Zariski's main theorem that $F_L \to E$ is an
isomorphism \cite[\href{https://stacks.math.columbia.edu/tag/0AB1}{Tag
0AB1}]{stacks-project}.
\end{proof}

\subsection{Proof of injectivity in the smooth case}
\label{subsection:smooth-case}

Having verified the smooth quasi-projective case, we next verify the
smooth finite type and separated case.
The general idea is to use Chow's lemma to reduce to the projective case, but
there are a number of technical details.
We start by explaining the geometric consequence that Chow's lemma gives us.

\begin{lemma}
	\label{lemma:projective-codim-2-compactification}
	Suppose that $U$ is a smooth separated scheme of finite type over an
	algebraically closed field $k$ with a normal compactification
	$\alpha: U \to \overline{U}$.
	There is a closed subscheme $Z \subset U$ of codimension at least $2$ 
	and a normal projective compactification
	$\beta: U - Z \to X$ with a projective map
	$f: X \to \overline{U}$ so that $\alpha|_{U-Z} = f \circ \beta$.
\end{lemma}
\begin{proof}
	Using Chow's lemma, we can find a projective scheme $X$ with a
	birational projective map $f: X \to \overline{U}$, see
\cite[\href{https://stacks.math.columbia.edu/tag/0200}{Tag 0200}]{stacks-project}
and
\cite[\href{https://stacks.math.columbia.edu/tag/0201}{Tag
0201}]{stacks-project}.

We next construct a subscheme $Z \subset U$ of codimension at least $2$ and a
birational map $\beta: U - Z \to X$.
Since $f$ is birational, there is a dense open $W \subset U$ over
which $f$ is an isomorphism, so we obtain a map $g: W \to X$ which is an
isomorphism onto its image. Because $U$ is regular in codimension $1$ and $X$ is proper,
there is a scheme $Z \subset U$ of codimension at least $2$ so that $g: W \to
X$ extends to a birational map $\beta: U - Z \to X$.
Now, restricting $f$, we get a map $f' : f^{-1}(\alpha(U-Z)) \to U - Z$.

We claim $\beta$ factors through $f^{-1}(\alpha(U-Z))$ and thus defines a section to
$f'$.
Indeed, consider the composition $f \circ \beta: U - Z \to X \to \overline{U}$.
This agrees with $\alpha$ over the dense open $W$, and hence agrees with the
given open immersion $U - Z \to U \xrightarrow{\alpha} \overline{U}$ on $W$.
Because $U - Z$ is separated, $f \circ \beta$ must agree with the above open
immersion on all of $U - Z$. This implies that $\beta$ sends $U - Z$ to
$f^{-1}(\alpha(U-Z))$.

Let $\beta': U- Z \to f^{-1}(\alpha(U-Z))$ denote the map whose composition with
$f^{-1}(\alpha(U-Z)) \to X$ is $\beta$.
We will show next that $\beta'$ is a closed immersion.
We have seen above that $\beta'$ is a section to $f'$. Therefore, $\beta'$ is a monomorphism.
Moreover since $f'$ is projective, hence proper, $\beta'$ is also proper, as any
section to a proper map is proper via the cancellation theorem
\cite[10.1.19]{vakil:foundations-of-algebraic-geometry-2017} applied to the
composition $f' \circ \beta'$.
Since $\beta'$ is a proper monomorphism, it is a closed immersion
\cite[\href{https://stacks.math.columbia.edu/tag/04XV}{Tag
04XV}]{stacks-project}, hence projective.

We now conclude the proof.
By the above, the composition $U- Z \to f^{-1}(\alpha(U-Z)) \to X$ is
the composition of a closed immersion and an open immersion into a projective
scheme. This implies $U- Z$ is quasi-projective, and $U- Z \to X$ is a
normal projective compactification, as desired.
By construction, $\alpha|_{U-Z} = f \circ \beta$.
\end{proof}

We are now ready to reduce the proof of
\autoref{theorem:isomorphism-on-algebraically-closed-base-change} to the general
smooth case over an algebraically closed field,
which follows without much difficulty by applying the above lemma.
\begin{proposition}
	\label{proposition:smooth-finite-type-case}
	\autoref{theorem:isomorphism-on-algebraically-closed-base-change} holds
	when $U$ is smooth.
\end{proposition}
\begin{proof}
	Recall that $U$ is now smooth, finite type, and separated over $k =
	\overline k$ but not necessarily quasi-projective.
	Using Nagata compactification
\cite[\href{https://stacks.math.columbia.edu/tag/0F41}{Tag 0F41}]{stacks-project}
as described in \autoref{remark:tameness-definition},
we can find a normal compactification $\alpha: U \to \overline{U}$.
	By \autoref{lemma:projective-codim-2-compactification}, there is a
	closed subscheme $Z \subset U$ of codimension at least $2$ and a
	projective normal compactification $\beta: U - Z \to X$ with a
	projective map $f: X\to \overline{U}$ so that $\alpha|_{U - Z} = f \circ
	\beta$.

	For $Z \subset U$ of codimension at least $2$ as in
	\autoref{lemma:projective-codim-2-compactification},
	we have $\pt(U) \simeq \pt(U-Z)$ by
	\autoref{lemma:removing-codimension-2}.
	Therefore, it is enough to prove the theorem for the compactification $U
	-Z \to \overline U$. 
	By \autoref{lemma:dominating-compactification}, it is enough to prove
	the theorem for the compactification $U-Z \to X$ in place of $U - Z \to
	\overline U$.
	Finally, the theorem holds for the projective compactification $U - Z \to X$
	by \autoref{proposition:smooth-quasi-projective-case}.
\end{proof}

\subsection{Proof of injectivity in the general case}	
\label{subsubsection:normal-case}
We now complete the proof of the theorem for normal connected quasi-projective schemes,
using that we have proven it for
smooth $U$.
\begin{proof}[Proof of \autoref{theorem:isomorphism-on-algebraically-closed-base-change}]
	By \autoref{lemma:surjectivity}, the map $\pt(U_L) \ra \pt(U)$ is surjective.
	To complete the proof, we wish to show it is injective.
To verify the map $\pt(U_L) \ra \pt(U)$ is injective, by
\cite[Corollary 5.5.8]{szamuely:galois-groups-and-fundamental-groups},
it is enough to show that if $E \ra U_L$ is any connected finite \'etale cover,
then $E$ is isomorphic to $\widetilde{F}_L$ for $\widetilde{F} \ra U$ some connected
finite \'etale cover.
To see this, start with some $E \ra U_L$.
Let $W \subset U$ denote the maximal dense smooth open subscheme of $U$. 
Since we have already shown the map $\pt(W_L) \ra \pt(W)$ is an isomorphism
in \autoref{proposition:smooth-finite-type-case},
we know that $E \times_{U_L} W_L$ is isomorphic to the base change of some finite \'etale
cover
$F\ra W$ along $\spec L \ra \spec k$.
Let $\widetilde{F}$ denote the normalization of $U$ in $F$.
Since $U$ is normal, $\widetilde{F} \ra U$ is a finite morphism.
The setup this far is summarized by the commutative diagrams
\begin{equation}
	\nonumber
	\begin{tikzcd} 
		E \times_{U_L} W_L \ar {r} \ar {d} & E \ar {d} & F \ar {r} \ar {d} & \widetilde{F}  \ar {d} \\
		W_L \ar {r} & U_L  & W \ar {r} & U. 
	\end{tikzcd}\end{equation}

To complete the proof, we only need to show $\widetilde{F} \ra U$
is tame finite \'etale and there is an isomorphism $\widetilde{F}_L \simeq E$ over $U_L$.
Indeed, since $\widetilde{F}$ is normal and finite over $U$, the base change
$\widetilde{F}_L$ is also normal and finite over $U_L$.
It follows that $\widetilde{F}_L$ is the normalization of
$U_L$ in $F_L \simeq E \times_{U_L} W_L$.
But, since $E$ is also the normalization of $U_L$ in 
$E \times_{U_L} W_L$, we obtain that
$E \simeq \widetilde{F}_L$. 
Since $\widetilde{F}_L \simeq E \ra U_L$ is tame finite \'etale, it follows that
$\widetilde{F} \ra U$ is also tame finite \'etale, completing the proof. 
\end{proof}

\appendix
\section{Collected Lemmas}
\label{section:appendix}

In this appendix, we collect several lemmas used in the course of the above proof.
These are all quite standard, and we only include them for completeness. We include them in this appendix and
not in the body so as not to distract from the flow of the proof.

We begin with two standard results on how the tame fundamental group behaves upon passing to open subschemes.
These follow from the usual well-known versions for the full \'etale fundamental
group, but we spell out the usual proof for the reader's convenience.
\begin{lemma}
	\label{lemma:normal-open-surjective}
	Let $Y$ be a normal quasi-projective connected scheme and $W \subset Y$ be a nonempty open. Then the natural map
	$\pi_1(W) \to \pi_1(Y)$ is surjective. In particular,
	$\pt(W) \ra \pt(Y)$ is surjective, where tameness for $Y$ is taken with
	respect to a projective normal compactification $Y \to \overline{Y}$ and
	tameness for $W$ is taken with respect to $W \to Y \to \overline{Y}$.
\end{lemma}
\begin{proof}
	Assuming surjectivity of $\pi_1(W) \to \pi_1(Y)$,
	surjectivity of $\pt(W) \ra \pt(Y)$ follows from commutativity of the
	square
	\begin{equation}
		\label{equation:}
		\begin{tikzcd} 
			\pi_1(W) \ar {r} \ar {d} & \pi_1(Y) \ar {d} \\
			\pt(W) \ar {r} & \pt(Y)
	\end{tikzcd}\end{equation}
	and the fact that the vertical maps are surjective.

	It remains to verify $\pi_1(W) \to \pi_1(Y)$ is surjective. We need to check any connected finite \'etale cover $E \ra Y$
	has pullback $E \times_Y W$ which is also connected. 
	First, we claim $E$ is normal. Indeed, since normality is equivalent to being R1 and S2,
	$E$ is normal because the properties of being R1 and S2
	are preserved under \'etale morphisms.
	Therefore, $E$ is normal and connected, hence integral.
	Then, $E \times_Y W$ is a nonempty open subscheme of the integral scheme $E$, hence connected.
\end{proof}
For a proof of the next lemma in the case of fundamental groups, instead of tame
fundamental groups, see
\cite[Corollary 5.2.14]{szamuely:galois-groups-and-fundamental-groups}.
\begin{lemma}
	\label{lemma:removing-codimension-2}
	Let $U$ be a connected smooth $k$-scheme and $V \subset U$ a closed subscheme of codimension at least $2$.
	Then the natural map $\pt(U - V) \rightarrow \pt(U)$ is an isomorphism,
	where tameness for $U$ is taken with respect to a projective normal
	compactification $U \to \overline U$, and tameness for $U - V$ is taken
	with respect to $U- V \to U \to \overline{U}$.
\end{lemma}
\begin{proof}

The map is surjective by \autoref{lemma:normal-open-surjective}, so it suffices to verify injectivity.
	For this, we have to show that any tame finite \'etale cover 
 $E \ra U - V$ extends uniquely to a tame finite \'etale cover $E'$ of $U$.
 If $E \to U - V$ is tame, it follows from the definition of tameness and our
 compatible choices of compactifications that any extension will automatically
 also be tame.
 Hence, it suffices to show there is a unique extension.
Uniqueness is immediate because $E'$ is necessarily normal, and hence must be 
the normalization of $U$ in $E$.
So it suffices to check that the normalization $E'$ of $U$ in $E$ is a finite \'etale cover of $U$, restricting
to $E$ over $U - V$. 
That $E'$ restricts to $E$ over $U-V$ is clear and $E' \ra U$ is finite by finiteness of normalization.
Finally, $E'\ra U$ is \'etale by Zariski--Nagata purity
as in \cite[Exp.\,X, Th\'eor\`eme 3.1]{noopsortSGA1Grothendieck1971}
because it is \'etale over all codimension $1$ points and $U$ is smooth.
\end{proof}

Finally, we record a field-theory result on linear disjointness of certain extensions.
\begin{lemma}
	\label{lemma:linearly-disjoint}
	Suppose $k \ra L$ are algebraically closed fields.
	Let $k(x_1, \ldots, x_n) \to F$ 
	by any finite separable extension.
	Then $k(x_1, \ldots, x_n) \to F$ and $k(x_1, \ldots, x_n) \ra L(x_1, \ldots, x_n)$
	are linearly disjoint extensions.
\end{lemma}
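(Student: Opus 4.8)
The plan is to deduce this from the standard fact that a \emph{regular} field extension is linearly disjoint from every algebraic extension of its base. Write $F := k(x_1,\ldots,x_n)$ and $M := L(x_1,\ldots,x_n)$, so the claim is that $F^s$ and $M$ are linearly disjoint over $F$. It suffices to prove that $M/F$ is a regular extension: then $M$ is linearly disjoint from $\overline F$ over $F$, hence a fortiori from the subextension $F^s \subseteq \overline F$, and since linear disjointness is a symmetric condition, this is exactly the assertion.

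The work is therefore in checking that $M/F$ is regular, which I would do in three short moves. First, $L/k$ is regular: $k$ is algebraically closed, hence perfect, so $L/k$ is separable, and $k$ is trivially algebraically closed in $L$ (equivalently, $L \otimes_k \overline k = L$ is a domain). Second, since $L/k$ is regular, $L \otimes_k F$ is an integral domain; explicitly $L \otimes_k k(x_1,\ldots,x_n)$ is the localization of the polynomial ring $L[x_1,\ldots,x_n]$ at $k[x_1,\ldots,x_n]\setminus\{0\}$, so it is a domain whose fraction field is precisely $L(x_1,\ldots,x_n) = M$. Third, to see $M/F$ is regular it is enough to check that $M \otimes_F \overline F$ is a domain; but this ring is a localization of $L \otimes_k F \otimes_F \overline F = L \otimes_k \overline F$, which is a domain because $L/k$ is regular and $\overline F/k$ is a field extension, and a localization of a domain is a domain. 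Here I would cite a standard reference (for instance Fried--Jarden, \emph{Field Arithmetic}, or Bourbaki, \emph{Alg\`ebre} V) for the equivalence ``$M/F$ regular $\iff$ $M$ linearly disjoint from $\overline F$ over $F$ $\iff$ $M\otimes_F\overline F$ a domain'' and for the stability of regularity under these constructions.

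I do not anticipate a serious obstacle: the one point that genuinely uses the hypothesis is that the tensor products above remain domains, i.e. the geometric-integrality input that $k$ be algebraically closed (equivalently, that $F$ be geometrically integral over $k$), and the statement does fail without it. If one prefers to avoid the regular-extension formalism, the same argument can be run level by level: a finite subextension of $F^s/F$ has the form $F(\alpha)$ with $\alpha$ a root of a monic irreducible $g \in k(x_1,\ldots,x_n)[T]$; clearing denominators and applying Gauss's lemma identifies $g$ with an irreducible element of $k[x_1,\ldots,x_n,T]$, which stays irreducible in $L[x_1,\ldots,x_n,T]$ because the prime ideal an irreducible polynomial cuts out over the algebraically closed field $k$ stays prime after any base field extension; running Gauss's lemma in reverse shows $g$ remains irreducible over $L(x_1,\ldots,x_n)$, so $[M(\alpha):M] = [F(\alpha):F]$, and taking the union over all such subextensions yields the disjointness.
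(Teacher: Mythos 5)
Your proof is correct, and it rests on the same underlying input as the paper's --- namely that extensions of the algebraically closed field $k$ are regular, so that tensoring with $L$ over $k$ preserves domains, transported to the function-field level by viewing the relevant tensor product as a localization of a polynomial ring over $L\otimes_k(\cdot)$ --- but the packaging is genuinely different. The paper never invokes the regular-extension formalism: it argues directly that any finite separable extension $F'$ of $k(x_1,\ldots,x_n)$ inside $L(x_1,\ldots,x_n)$ is trivial, by embedding $F'\otimes_F F'$ into $M\otimes_F M$ (with $F=k(x_1,\ldots,x_n)$, $M=L(x_1,\ldots,x_n)$), showing the latter is a domain via the localization trick applied to $(L\otimes_k L)[x_1,\ldots,x_n]$, and using that the separable closure is Galois over $F$ so that trivial intersection suffices for linear disjointness. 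You instead prove that $M/F$ is regular by checking $M\otimes_F\overline F$ is a domain (reducing to $L\otimes_k\overline F$) and then quote the standard equivalence ``regular $\iff$ linearly disjoint from $\overline F$,'' which buys a shorter argument at the cost of leaning on a cited formalism rather than a self-contained tensor computation; your fallback argument via the primitive element theorem, Gauss's lemma, and persistence of irreducibility over the algebraically closed base is a more elementary third route that avoids both formalisms and would also be acceptable. All three hinge on exactly the same geometric-integrality fact, so the choice among them is one of exposition rather than substance.
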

\begin{proof}
We want to show the only finite separable extension of $k(x_1,\ldots, x_n)$ in $L(x_1,\ldots, x_n)$ is $k(x_1,\ldots, x_n)$.
To this end, let $F$ be some finite separable extension of $k(x_1,\ldots, x_n)$ in $L(x_1,\ldots, x_n)$.
So, to see $F$ is equal to $k(x_1,\ldots, x_n)$, it suffices to show
$F \otimes_{k(x_1,\ldots, x_n)} F$ is a domain.
We have a containment 
\begin{align*}
F \otimes_{k(x_1,\ldots, x_n)} F \subset L(x_1,\ldots, x_n) \otimes_{k(x_1,\ldots, x_n)} L(x_1,\ldots, x_n),
\end{align*}
so it suffices to show 
\begin{align*}
L(x_1,\ldots, x_n) \otimes_{k(x_1,\ldots, x_n)} L(x_1,\ldots, x_n)
\end{align*}
is a domain.
Indeed, this is a localization of 
\begin{align*}
L[x_1,\ldots, x_n] \otimes_{k[x_1,\ldots, x_n]} L[x_1,\ldots, x_n] \simeq (L\otimes_k L)[x_1,\ldots, x_n],
\end{align*}
so it suffices to show $L \otimes_k L$ is a domain.
This then holds because $L$ is a domain, and a domain over an algebraically
closed field is still a domain upon base change to any larger algebraically closed field,
i.e., the property of being geometrically integral is preserved under base change between algebraically closed fields.
\end{proof}

\bibliographystyle{alpha}
\bibliography{/home/aaron/Dropbox/master}

\end{document}